\tikzstyle{nodino}=[circle,draw,fill,inner sep=0pt,minimum size=0.5mm]
\tikzstyle{infinito}=[circle,inner sep=0pt,minimum size=0mm]
\tikzstyle{nodo}=[circle,draw,fill,inner sep=0pt, minimum size=0.5*width("k")]
\tikzstyle{nodo_vuoto}=[circle,draw,inner sep=0pt, minimum size=0.5*width("k")]
\tikzset{every loop/.style={min distance=10mm,in=300,out=240,looseness=10}}
\tikzset{place/.style={circle,thick,draw=blue!75,fill=blue!20,minimum
		size=6mm}}
\tikzset{place2/.style={circle,thick,draw=red!75,fill=red!20,minimum
		size=6mm}}
\newcommand{\f}{\frac}
\newcommand{\R}{{\mathbb R}}
\newcommand{\rr}{{\mathbb R}}
\newcommand{\zz}{{\mathbb Z}}
\newcommand{\G}{{\mathcal{G}}}
\newcommand{\udot}{\|\psi'\|_{L^2(\mathcal{G})}}
\newcommand{\uLp}{\|\psi\|_{L^p(\mathcal{G})}}
\newcommand{\uLtwo}{\|\psi\|_{L^2(\mathcal{G})}}
\newcommand{\intX}{\int_{X_{j,k}}|\psi'(t)|\,dt}
\newcommand{\intY}{\int_{Y_{i,k}}|\psi'(t)|\,dt}
\newcommand{\intZ}{\int_{Z_{i,j}}|\psi'(t)|\,dt}
\theoremstyle{plain} 
\newtheorem{thm}{Theorem}[section]
\newtheorem{prop}[thm]{Proposition} 
\theoremstyle{definition}
\theoremstyle{definition}
\theoremstyle{remark}
\author{Riccardo Adami, Simone Dovetta \\
Dipartimento di Scienze Matematiche G.L. Lagrange \\
Politecnico di Torino \\
C.so Duca Degli Abruzzi 24, 10129 Torino, Italy \\ 
{riccardo.adami@polito.it}, simone.dovetta@polito.it \\ \ \\
{\em To Gianfausto Dell'Antonio for his 85th birthday}}
\title{One-dimensional versions of three-dimensional system: Ground states for the NLS on the spatial grid}
\begin{document}
	
\maketitle

\begin{abstract}
We investigate the existence of ground states for the focusing Nonlinear Schr\"odinger Equation on the infinite three-dimensional cubic grid. We extend the result found for the analogous two-dimensional grid by proving an appropriate Sobolev inequality giving rise to a family of critical Gagliardo-Nirenberg inequalities that hold for every nonlinearity power from $10/3$ and $6$, namely, from the $L^2$-critical power for the same problem in $\R^3$ to the critical power for the same problem in $\R$. Given the Gagliardo-Nirenberg inequality, the problem of the existence of ground state can be treated as already done for the two-dimensional grid.

\end{abstract}

	\section{Introduction}
Even though {\em quantum graphs} have a local one-dimensional structure, it is possible to envisage graphs extending in different directions and looking on the large scale as planar, spatial, or even more-dimensional. Potentially, in such a way one could try to build up one-dimensional models capable to mimic the dynamics of higher dimensional systems. Such a task can be seen as 
an advanced version of the classical issue of approximating a problem in $\R^n$ through a point grid, with the advantage of reproducing the original model at a scale shorter than the distance between two adjacent points of the lattice.

At present, the research programme on graph approximation to higher dimensional structures is at its beginning and, to our knowledge, the only result concerning the NLS has been given in \cite{ADST} for the graph called {\em two-dimensional grid} (see Fig. \ref{2d}) and establishes that, concerning existence of ground states, the energy functional associated to the Schr\"odinger Equation with a power nonlinearity may reproduce both a one and a two-dimensional behaviour. In particular, it is established that all powers between the  critical ones for the NLS in $\R$ and in $\R^2$ can be considered as critical  for the NLS on the two-dimensional grid, and this feature can be described as a dimensional crossover, in accordance with the double nature of the graph: one-dimensional on the short scale, two-dimensional in the large.

\begin{figure} 
	\begin{center}
		\begin{tikzpicture}[xscale= 0.5,yscale=0.5]
		\draw[step=2,thin] (0,0) grid (8,8);
		\foreach \x in {0,2,...,8} \foreach \y in {0,2,...,8} \node at (\x,\y) [nodo] {};
		\foreach \x in {0,2,...,8}
		{\draw[dashed,thin] (\x,8.2)--(\x,9.2) (\x,-0.2)--(\x,-1.2) (-1.2,\x)--(-0.2,\x)  (8.2,\x)--(9.2,\x); }
		\end{tikzpicture}
	\end{center}
	\caption{The two-dimensional grid.}
	\label{2d}
\end{figure}
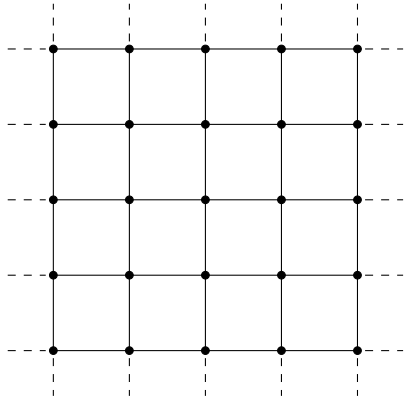

In this note we extend the analysis to the case of the three-dimensional grid (see Fig. \ref{FIG-grid}) and find an analogous result: the problem possesses both the critical powers of its analogous in $\R$ and in $\R^3$ (and, by interpolation, all intermediate powers can be considered as critical). Exploiting this fact, and following step-by-step the line of \cite{ADST}, one immediately replicates the result on existence and nonexistence of ground states given by Theorems 1.1--1.2 in \cite{ADST}.

Before stating precisely the result, let us introduce some definitions and notation. In the ordinary euclidean space $\R^n$ it is well-known that the focusing nonlinear Schr\"odinger Equation
$$ i \partial_t \psi (t,x) \ = \ - \Delta \psi (t,x) - | \psi (t,x) |^{p-2} \psi (t,x) $$
is locally well-posed in $H^1 (\R^n)$ provided that $2 \leq p \leq  2n/(n-2)$ if $n \geq 3$ and for every $p$ if $n = 1,2$. Moreover, the $L^2$-norm (whose square is also called {\em mass}) and the energy
\begin{equation} \label{energy}
\begin{split}
E (\psi(t)) \ = \ & \f 1 2 \| \psi (t) \|_{L^2 (\R^n)}^2 - \f 1 p \| \psi (t) \|_{L^p (\R^n)}^p
\\
\ = \ & T (\psi (t)) - V (\psi (t))
\end{split}
\end{equation}
are conserved, as long as the solution $\psi (t)$ exists. Through the symbols $T$ and $V$ we made clear that the first term in the energy is to be understood as kinetic energy, while the second as a nonlinear potential.

It transpires that, in order for $E$ to be lower bounded, it is necessary that every function $\psi (t)$ bearing a large
 potential $V(\psi (t))$ possesses an at least equally large kinetic energy $T(\psi (t))$. However, as the potential scales with a larger power, it is immediately seen that, regardless of the dimension $n$, the energy functional $E$ cannot be lower bounded, and in particular a global minimizer in $H^1$ cannot exist. Nonetheless, due to the conservation law of the mass, it is  meaningful to investigate whether a minimizer exists among functions sharing the same mass $\mu$ ({\em{mass constraint}}): we study then a constrained problem, more precisely the minimization  of a functional on $L^2$-spheres.

To this aim, one needs to compare $T$ and $V$, and the main tool to do that is provided 
 by the Gagliardo-Nirenberg inequalities
\begin{equation} \label{gajardo}
V \ \leq \ C T^\alpha \mu^\beta,
\end{equation}
where  $\mu$ denotes the mass of $\psi (t)$ (notice that by homogeneity $2 \alpha + 2 \beta = p$).
 The precise determination of $\alpha$ as a function of the power $p$ can be done by observing that the inequality must hold after the dilation $x \mapsto \lambda x$ and this is possible only if
$$\alpha = n \left( \frac p 4 - \frac 1 2 \right),$$ 
so that the threshold between the two behaviours is given by
\begin{equation}
\label{critical}
p^* = 2 + \frac 4 n.
\end{equation}
Plugging \eqref{gajardo} into \eqref{energy} supplies the estimate from below 
$$
E (\psi (t)) \ \geq \ T (\psi (t)) - C T^\alpha (\psi (t))
$$
where the factor $\mu^\beta$ in \eqref{gajardo} was included in the constant $C$. It becomes then clear
that if $p < p^*$ then $\alpha < 1$ and the constrained energy is lower bounded for every $\mu$. Furthermore, one can see that the infimum is actually strictly negative. On the contrary, an easy rescaling argument shows that the constrained energy is unbounded from below for every $\mu$ if $p > p^*$.

If $p= p^*$, then $\alpha = 1$ and the Gagliardo-Nirenberg inequality \eqref{gajardo} reads
\begin{equation} \label{criticalgn}
V \leq C \mu^{\frac p 2 - 1} T,
\end{equation}
so the estimate from below for the constrained energy becomes
$$ E (\psi (t)) \ \geq \ T(\psi(t)) ( 1 - C  \mu^{\frac p 2 - 1}),
$$
thus
the functional $E$ is lower bounded and nonnegative for $\mu \leq \mu^*$, where $\mu^*
= C^\frac{2}{2-p}$
is called the {\em critical mass}. Moreover it is well-known that only for $\mu = \mu^*$ the energy reaches level zero (so there are ground states), while for $\mu > \mu^*$ the functional is unbounded from below. We stress that the notion of critical mass is meaningful only when the nonlinearity under investigation is the critical one.





Analysing more closely the meaning of the critical power $p^*$, one notices that it 
enjoys two distinct features:
\begin{enumerate}
\item $p^*$ is the highest power below which there exists a negative energy ground state, regardless of the choice of $\mu$;
\item $p^*$ is the lowest power beyond which the energy functional is not lower bounded, regardless of the choice of $\mu$.
\end{enumerate}

\smallskip

As nowadays widely recognized, it is possible to extend the Nonlinear Schr\"odinger Equation from $\R^n$ to quantum graphs (\cite{acfn10,noja,pelinovsky,matrasulo}), namely connected structures made of edges and vertices. In order to gain well-posedness of the problem on graphs, one must specify the behaviour of the wave function at the vertices. Here we restrict our scope to the choice of imposing \emph{continuity} of the wave function at vertices: this is enough in order to have a well-defined energy functional formally analogous to \eqref{energy}, and can be considered somewhat natural since $H^1$-regularity, and then continuity, inside the edges is requested by the definition of the functional \eqref{energy}. For the well-posed system one immediately has conservation of mass and energy (\cite{acfn10}). Concerning the existence of ground states and stationary solutions of the NLS, the problem has been addressed on non-compact graphs made of a compact core and a certain number of halflines, both in the case of extended (\cite{AST2015,ast16,AST-cmp17,AST-ln}) and concentrated nonlinearity (\cite{DT-rxv18,ST-jde16,ST-na16,tentarelli-jmaa16}), and on compact graphs (\cite{dovetta,CDS}). 

In such settings, by a straightforward argument based on rearrangement techniques, one finds that one-dimensional Gagliardo-Nirenberg inequalities hold in every metric graph (\cite{ast16}),
so that the power $p^* = 6$ turns out to be critical for the NLS on every quantum graph, in the sense that if $p=6$, then the critical Gagliardo-Nirenberg \eqref{criticalgn} holds.

Yet, for some classes of quantum graphs the critical power turns out not to be unique. More specifically,
in \cite{ADST} it was proven that the NLS on the two-dimensional grid admits critical nonlinearities for every power between $p^*_- = 4$ and $p^*_+ = 6$. More precisely
\begin{enumerate}
\item $p^*_-$ is the highest power below which there exists a negative energy ground state, regardless of the choice of $\mu$;
\item $p^*_+$ is the lowest power over which the energy functional is not lower bounded, regardless of the choice of $\mu$.
\end{enumerate}
Now, since, in the case of the square grid, $p^*_-$ is the critical power for the NLS in $\R^2$, while $p^*_+$ is the critical power for the NLS in $\R$, the splitting of the critical power manifests itself as a dimensional transition, or {\em crossover}, between the one and the two-dimensional setting. 

The purpose of this note is to prove an analogous result for the three-dimensional grid, with the natural replacement 
$p^*_- = 10/3$, that coincides with the critical power for $\R^3$.  Of course, we are convinced that the same conclusion holds at every dimension, with $p^*_- = 2 + 4/n$, but the method of deriving the crucial estimates may vary considerably with the dimension of the grid, so that, for the sake of clarity, we treat the problem in dimension three only.

As shown before, in the domain $\R^n$ the critical power corresponds to the unique value of the nonlinearity $p^*$ in correspondence of which the power $\alpha$ in \eqref{gajardo} equals one. The uniqueness of such a value comes from the fact that $p$ uniquely determines $\alpha$ which, in turn, depends on the fact that $\R^n$ is invariant under scaling, therefore Gagliardo-Nirenberg inequality \eqref{gajardo} must be invariant too.

It is then clear that the dimensional crossover, namely, the splitting of the value of the critical power, can occur since for those more-dimensional grids the power $\alpha$ in \eqref{gajardo} cannot be uniquely determined by invariance under scaling, as grids do not enjoy such invariance! This fact makes possible to prove {\em critical Gagliardo-Nirenberg inequalities}, namely inequalities of the kind
$$
V (\psi (t)) \ \leq \ C \mu^{\frac P 2 - 1} T(\psi(t))
$$
for every power $p $ in the interval $ \left[ 2 + \frac 4 n, 6
\right]$.

Such estimates are found through appropriate
Sobolev inequalities showing that some $L^p$-norm of a function in $H^1$  can be estimated by the $L^1$-norm of its gradient. From Sobolev inequalities one can then deduce the family of Gagliardo-Nirenberg inequalities, including the critical ones.

For instance, owing to the fact that graphs are one-dimensional structures, it is easy to prove that for every non-compact graph a Sobolev inequality characteristic of $\R$ holds, namely
\begin{equation} \label{sobolev1}
\| \psi \|_\infty \ \leq \ C \| \psi ' \|_1,
\end{equation}
from which the following \emph{critical} Gagliardo-Nirenberg inequality follows:
\begin{equation}
\label{critical1}
\| \psi \|_6^6 \ \leq \ C \mu^2 \| \psi' \|_2^2.
\end{equation}
On the other hand, mimicking the standard proof for the case of $\R^2$, one gets the Sobolev inequality for the two-dimensional grid:
\begin{equation} \label{sobolev2}
\| \psi \|_2 \ \leq \ C \| \psi' \|_1
\end{equation}
from which it follows the {\em critical} Gagliardo-Nirenberg inequality
\begin{equation} \label{critical2}
\| \psi \|_4^4 \ \leq \ C \mu \| \psi' \|_2^2,
\end{equation}
that, interpolated with \eqref{critical1}, gives
$$
\| \psi \|_p^p \ \leq \ C \ \| \psi \|_2^{p-2} \| \psi' \|_2^2, \qquad \forall p \in [4,6].
$$

Here we show the validity of an analogous theory for the three-dimensional grid, namely: we first prove the Sobolev inequality
\begin{equation} \label{sobolev3}
\| \psi \|_{3/2} \ \leq \ C \| \psi' \|_1
\end{equation}
from which we obtain the three-dimensional {\em critical} Gagliardo-Nirenberg inequality
\begin{equation} \label{critical3}
\| \psi \|_{10/3}^{10/3} \ \leq \ C \mu^{2/3} \| \psi' \|_2^2.
\end{equation}
Like for the two-dimensional grid, this inequality can be interpolated with \eqref{critical1}, so finally one find that the {\em critical} Gagliardo-Nirenberg inequality
$$
\| \psi \|_p^p \ \leq \ C \ \| \psi\|_2^{p-2} \| \psi' \|_2^2, \qquad \forall p \in [10/3, 6],
$$
namely
\begin{equation} \label{last}
V (\psi (t)) \ \leq \ C \mu^{\frac p 2 - 1} T (\psi (t)),
\end{equation}
holds for $p \in \left[ \frac {10} 3, 6 \right]$.

Once proven \eqref{last}, one can follow 
Section 3 and Section 4 of \cite{ADST}, 
and prove the following
\begin{thm}[Existence of Ground States for the three dimensional grid]
	Consider the functional $E$ given by \eqref{energy} defined on the three-dimensional grid (see Fig. \ref{FIG-grid}), and call {\em ground state} any minimizer of $E$ at fixed mass $\mu$. There results
	\begin{enumerate}
		\item If $2 < p < 10/3$, then a ground state with negative energy exists for every positive $\mu$.
		\item For every $p \in [10/3 , 6]$, then there exists a {\em critical mass} $\mu_p$ such that
		\begin{enumerate}
			\item If $\mu < \mu_p$, then every function $\psi$ has positive energy and a ground state does not exist.
			\item If $p \neq \frac{10}{3}, 6$ and $\mu = \mu_p$, then there is a zero-energy ground state.
			\item If $p \neq 6$ and $\mu > \mu_p$, then there exists a negative energy ground state.
			\item If $p=6$ and $\mu\geq\mu_p$, then ground states do not exist.
		\end{enumerate}
		
	\end{enumerate}
\end{thm}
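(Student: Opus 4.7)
The plan is to replicate, step by step, the structure of the proof given in Sections 3 and 4 of \cite{ADST} for the two-dimensional grid, with $p=4$ replaced by $p=10/3$ throughout. Since the critical Gagliardo-Nirenberg family \eqref{last} --- the sole three-dimensional-specific ingredient --- has just been established, the rest of the argument of \cite{ADST} applies essentially verbatim. Throughout I write $\mathcal{E}(\mu)$ for the infimum of $E$ on the mass sphere $\{\psi\in H^1(\G) : \|\psi\|_2^2=\mu\}$.

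For claim (1) the regime is subcritical: interpolating \eqref{critical3} with the trivial $L^2$ bound gives $\|\psi\|_p^p \leq C\, T^\alpha \mu^\beta$ with $\alpha<1$, so $E$ is coercive and bounded from below on every mass sphere, and testing with a function concentrated on a single half-line of $\G$ yields $\mathcal{E}(\mu)<0$. A standard concentration-compactness scheme then extracts a minimizer: $H^1$-boundedness of minimizing sequences is automatic, vanishing is ruled out by $\mathcal{E}(\mu)<0$, and dichotomy is excluded by the strict subadditivity $\mathcal{E}(\mu_1+\mu_2)<\mathcal{E}(\mu_1)+\mathcal{E}(\mu_2)$, itself proved using the same half-line construction.

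For claim (2) one uses \eqref{last} with sharp constant $K_p$ to write
$$E(\psi)\ \geq\ T(\psi)\bigl(1 - K_p\,\mu^{p/2-1}\bigr),$$
so that the natural choice $\mu_p := K_p^{-2/(p-2)}$ immediately yields $E \geq 0$ on the mass sphere whenever $\mu\leq\mu_p$, proving (2a). For $p\in(10/3,6)$ a test function built from an almost-extremizer of \eqref{last} shows $\mathcal{E}(\mu)<0$ for $\mu>\mu_p$ and $\mathcal{E}(\mu_p)=0$; a concentration-compactness argument --- relying on the strict inequality between $K_p$ and the corresponding sharp constants of the one- and three-dimensional Gagliardo-Nirenberg problems that govern the asymptotics of $\G$ --- then produces a minimizer, giving (2b) and (2c). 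The case $p=10/3$, $\mu>\mu_p$ (also part of (2c)) is treated analogously, using that the grid admits only one-dimensional dispersion along edges, which keeps $\mathcal{E}(\mu)$ finite despite $\mu$ exceeding the three-dimensional Euclidean critical mass.

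The non-existence statement (2d) at $p=6$ is handled separately: the Sobolev inequality \eqref{sobolev1} and the resulting critical Gagliardo-Nirenberg \eqref{critical1} hold on $\G$ with the same sharp constant as on $\R$, while the extremizers on $\R$ are solitons whose shape is incompatible with vertex continuity on $\G$; a concentration-compactness analysis then shows that any minimizing sequence at mass $\mu\geq\mu_6$ concentrates into an asymptotic $\R$-soliton on a single half-line and cannot converge in $H^1(\G)$. The hardest point in the whole transcription is the quantitative comparison between $K_p$ and the Euclidean sharp constants at and near the endpoints of $[10/3,6]$: this comparison underpins both the compactness argument of (2b)--(2c) and the loss-of-compactness argument of (2d), and is the step most sensitive to the specific geometry of the three-dimensional grid.
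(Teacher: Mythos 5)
Your overall strategy coincides with the paper's, which offers no self-contained proof at all: it simply observes that once the critical Gagliardo--Nirenberg family \eqref{last} is available, Sections 3 and 4 of \cite{ADST} can be followed verbatim with $4$ replaced by $10/3$. To that extent your proposal is faithful. However, one of the concrete mechanisms you supply is wrong, and it is wrong precisely at the point where the three-dimensional geometry must enter.

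For claim (1) you assert that testing with a function concentrated on a single half-line of $\G$ yields a negative infimum for every positive mass, and you base strict subadditivity on the same construction. This cannot work. Any $H^1(\G)$ function essentially supported on one line must, by continuity at the vertices, be extended onto the four transverse edges issuing from each vertex it crosses, and the cheapest such extension already costs a kinetic energy bounded below by $c\,\mu/\ell^2$ with $c>0$ independent of the profile. Combining this with the one-dimensional Gagliardo--Nirenberg bound shows that every such quasi-one-dimensional test function has \emph{positive} energy once $\mu$ is small, for every $p\in(2,6)$. Moreover, if a half-line construction did give $\mathcal{E}(\mu)<0$ for all $\mu$, it would do so for all $p\in(2,6)$ and thus contradict part (2a) of the very theorem you are proving for $p\in[10/3,6)$. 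The correct source of negativity in the regime $2<p<10/3$ is the opposite one: test functions that spread over the full three-dimensional extent of the grid, mimicking the mass-preserving $\R^3$ dilation $\psi_\lambda(x)=\lambda^{3/2}\psi(\lambda x)$, for which $T\sim\lambda^2$ while $V\sim\lambda^{3(p-2)/2}$, so that $E<0$ for small $\lambda$ exactly when $p<10/3$. This is the step where the threshold $10/3=2+4/3$ (rather than $6$) is generated, and it is the analogue of the two-dimensional spreading argument in \cite{ADST}; your sketch as written would erase the dimensional crossover that is the point of the result. The remaining items (definition of $\mu_p$ via the sharp constant in \eqref{last}, positivity below $\mu_p$, and the compactness/non-compactness dichotomy at and above $\mu_p$, including non-existence at $p=6$) are described at a level of generality consistent with \cite{ADST}, though the claimed identity between the sharp constant of \eqref{critical1} on $\G$ and on $\R$ is an unverified guess rather than an established ingredient.
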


We highlight that the problem to determine whether ground states exist when $p=\frac{10}{3}$ and $\mu=\mu_{\frac{10}{3}}$ is still open.

The remainder of the note is devoted to the proof of inequality \eqref{last}.

	\begin{figure} \label{3d}
		\centering
		\begin{tikzpicture}
		[xscale= 0.6,yscale=0.6]
		\draw[step=3,thin] (0,0) grid (9,9);
		\foreach \x in {0,3,...,9} \foreach \y in {0,3,...,9} \node at (\x,\y) [nodo] {};
		
		\foreach \x in {0,3,...,9}
		{\draw[thin] (\x,9)--(\x,9.7) (\x,0)--(\x,-0.7) (-0.7,\x)--(0,\x)  (9,\x)--(9.7,\x); 
			\draw[dashed,thin] (\x,9.7)--(\x,10.2) (\x,-0.7)--(\x,-1.2) (-1.2,\x)--(-0.7,\x) (9.7,\x)--(10.2,\x);}
		
		\foreach \x in {0,3,...,9} \foreach \y in {0,3,...,9}
		{\draw[dashed,thin] (\x,\y)--(\x+2,\y+1.35);
			\draw[thin] (\x,\y)--(\x-0.5,\y-0.35);
			\draw[dashed,thin] (\x-0.5,\y-0.35)--(\x-1,\y-0.7);}
		
		\foreach \x in {1.5,4.5,...,10.5} \foreach \y in {1,4,...,10} \node at (\x,\y)
		[nodo] {};
		
		\foreach \x in {1.5,4.5,...,10.5} \foreach \y in {1,4,...,7} 
		{\draw[dashed,thin] (\x,\y)--(\x,\y+3);}
		
		\foreach \x in {1.5,4.5,...,10.5} 
		{\draw[dashed,thin] (\x,1)--(\x,0) (\x,10)--(\x,11);}
		
		\foreach \x in {1.5,4.5,...,7.5} \foreach \y in {1,4,...,10}
		{\draw[dashed,thin] (\x,\y)--(\x+3,\y);}
		
		\foreach \x in {1,4,...,10}
		{\draw[dashed,thin] (1.5,\x)--(0.5,\x) (10.5,\x)--(11.5,\x);}
		\end{tikzpicture}
		\caption{The three-dimensional grid $\G$.}
		\label{FIG-grid}
	\end{figure}
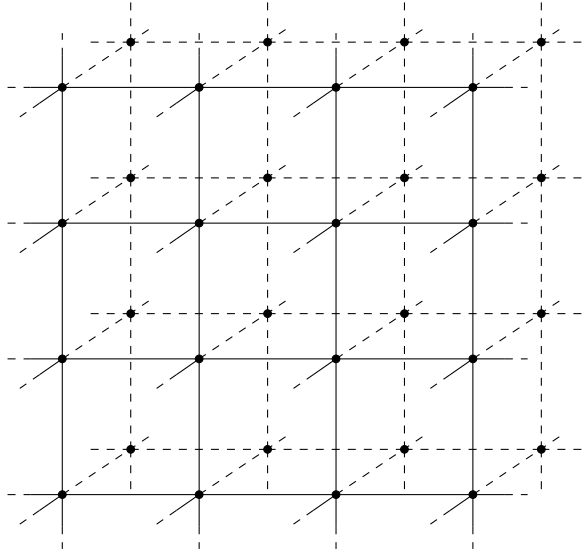

	\section{Notation}
	
	To simplify the notation, it is useful to consider the three-dimensional grid $\G$ as a subset of $\rr^3$, so that the set $V$ of vertices and the set $E$ of edges can be described as 
	
	\[
	V=\{(i\ell,j\ell,k\ell)\in\rr^3,\,(i,j,k)\in\zz^3\}\,,
	\]
	
	\[
	\begin{split}
		E=&\{[i\ell,(i+1)\ell]\times\{j\ell\}\times\{k\ell\}\subset\rr^3,\,(i,j,k)\in\zz^3\}\\
		&\cup\{\{i\ell\}\times[j\ell,(j+1)\ell]\times\{k\ell\}\subset\rr^3,\,(i,j,k)\in\zz^3\}\\
		&\cup\{\{i\ell\}\times\{j\ell\}\times[k\ell,(k+1)\ell]\subset\rr^3,\,(i,j,k)\in\zz^3\}\,.
	\end{split}
	\]
	
	\noindent Note that the set of edges naturally splits in three subsets, as above, the one whose edges are parallel to the $x-$axis, the $y-$axis and the $z-$axis, respectively. Actually, one can also interpret the whole grid $\G$ as a union of straight lines parallel to the axes, that is, defining, for every $(i,j,k)\in\zz^3$
	
	\[
	\begin{split}
	X_{j,k}:=&\rr\times\{j\ell\}\times\{k\ell\}\\
	Y_{i,k}:=&\{i\ell\}\times\rr\times\{k\ell\}\\
	Z_{i,j}:=&\{i\ell\}\times\{j\ell\}\times\rr
	\end{split}
	\]
	
	\noindent then we can write
	
	\[
	\G=\Big(\bigcup_{(j,k)\in\zz^2}X_{j,k}\Big)\cup\Big(\bigcup_{(i,k)\in\zz^2}Y_{i,k}\Big)\cup\Big(\bigcup_{(i,j)\in\zz^2}Z_{i,j}\Big)\,.
	\]
	
	\noindent Furthermore, for specific subsets of these lines we introduce the symbols
	
	\begin{equation}
	\label{EQ-notation}
	\begin{split}
		X_{j,k}(a,b):=&[a,b]\times\{j\ell\}\times\{k\ell\}\\ Y_{i,k}(a,b):=&\{i\ell\}\times[a,b]\times\{k\ell\}\\ 
		Z_{i,j}(a,b):=&\{i\ell\}\times\{j\ell\}\times[a,b]
	\end{split}
	\end{equation}
	
	\noindent where $a,b\in\rr\cup\{\pm\infty\}$.
	
	In terms of this notation, the definition of the functional spaces $L^p(\G)$ and $H^1(\G)$ is then straightforward, given by
	
	\[
	\begin{split}
		\uLp^p:=&\sum_{(j,k)\in\zz^2}\| \psi \|_{L^p(X_{j,k})}^p+\sum_{(i,k)\in\zz^2}\|\psi\|_{L^p(Y_{j,k})}^p+\sum_{(i,j)\in\zz^2}\|\psi\|_{L^p(Z_{i,j})}^p\\
		=&\sum_{(j,k)\in\zz^2}\int_{X_{j,k}}|\psi(x)|^p\,dx+\sum_{(i,k)\in\zz^2}\int_{Y_{i,k}}|\psi(x)|^p\,dx+\sum_{(i,j)\in\zz^2}\int_{Z_{i,j}}|\psi(x)|^p\,dx
	\end{split}
	\]
	
	\noindent and
	
	\[
	\| \psi \|_{H^1(\G)}^2:=\uLtwo^2+\udot^2\,,
	\]

	\noindent where we denoted by $H^1(\G)$ the space of functions whose restriction to every line $X_{j,k},\,Y_{i,k},\,Z_{i,j}$ belongs to $H^1(\rr)$ and that are continuous at every vertex of $\G$. A similar definition is stated for the space $W^{1,1}(\G)$, that is the space of of functions that are $H^1(\rr)$ when restricted to every line $X_{j,k},\,Y_{i,k},\,Z_{i,j}$ and that are continuous at every vertex.

	
	\section{Sobolev and Gagliardo-Nirenberg inequalities on $\G$}
	
	Throughout this section, we derive some inequalities that play a key-role allowing or preventing the existence of ground states on $\G$.
	
	First of all, we recall that, being $\G$ a non-compact metric graph, the following one-dimensional Gagliardo-Nirenberg inequality holds (a general proof of it, based on the use of Poly\'a-Szeg\"o inequality for symmetric rearrangements, can be found in \cite{AST JFA}).

	\begin{prop}
		\label{PROP-GN 1D}
		
		For every $p\in[2,\infty)$, there exists a constant $C_1:=C_1(p)>0$ such that
		
		\begin{equation}
		\label{EQ-GN 1D}
		\uLp^p\leq C_1\uLtwo^{\frac{p}{2}+1}\udot^{\frac{p}{2}-1}\qquad\forall u\in H^1(\G)\,.
		\end{equation}
		
		\noindent Moreover, 
		
		\begin{equation}
			\label{EQ-GN infty}
			\|\psi\|_{L^\infty(\G)}^2\leq\uLtwo^{1/2}\udot^{1/2}\,.
		\end{equation}
	\end{prop}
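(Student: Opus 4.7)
The plan is to reduce both inequalities on $\G$ to their classical one-dimensional counterparts on a halfline via symmetric decreasing rearrangement, following exactly the strategy outlined in the cited reference \cite{AST JFA}. Given $\psi \in H^1(\G)$, I introduce its decreasing rearrangement $\psi^*$, defined on the halfline $[0,+\infty)$ as the unique nonnegative, nonincreasing function whose super-level sets $\{\psi^* > t\}$ have the same one-dimensional Lebesgue measure as the super-level sets $\{x \in \G : |\psi(x)| > t\}$ (measured with the natural measure on $\G$, obtained by summing the Lebesgue measures on each edge).

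Two properties are then invoked. First, equimeasurability, which is immediate from the construction and gives $\|\psi^*\|_{L^p([0,\infty))} = \uLp$ for every $p \in [1,\infty]$. Second, the Pólya--Szegő inequality on non-compact metric graphs, $\|(\psi^*)'\|_{L^2([0,\infty))} \leq \udot$, which is exactly the content established in \cite{AST JFA}. Once these are in hand, the proof is a one-line application of the well-known one-dimensional Gagliardo--Nirenberg inequality on $[0,+\infty)$,
\[
\|u\|_{L^p}^{p} \leq C_1 \|u\|_{L^2}^{p/2+1}\|u'\|_{L^2}^{p/2-1}, \qquad p \in [2,\infty),
\]
applied to $u = \psi^*$, together with the endpoint estimate $\|u\|_{L^\infty}^{2}\leq 2\|u\|_{L^2}\|u'\|_{L^2}$, which in turn follows directly from $u(0)^{2} = -2\int_{0}^{\infty} u(t)u'(t)\,dt$ and the Cauchy--Schwarz inequality. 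Combining these with equimeasurability and the Pólya--Szegő inequality delivers \eqref{EQ-GN 1D} and \eqref{EQ-GN infty} at once.

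The only genuinely delicate point is the Pólya--Szegő inequality on $\G$, which requires a coarea-type argument and a careful treatment of level sets $\{|\psi|=t\}$, whose typical configuration consists of several points scattered across many different edges of the grid. However, since $\G$ is non-compact, every sufficiently low level set contains at least one unbounded connected component, and the general result of \cite{AST JFA} for arbitrary non-compact metric graphs applies verbatim. Therefore the main obstacle is absorbed into the citation, and the proposition follows without further difficulty.
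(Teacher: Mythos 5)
Your argument is essentially the paper's own: the authors give no proof of this proposition at all, deferring entirely to the rearrangement-based argument of the cited reference, and your reconstruction via the decreasing rearrangement onto the half-line, equimeasurability, the P\'olya--Szeg\H{o} inequality for non-compact metric graphs, and the classical half-line Gagliardo--Nirenberg inequality is precisely that argument. The only minor discrepancy is in the endpoint estimate: your half-line computation yields $\|\psi\|_{L^\infty(\mathcal{G})}^2\leq 2\|\psi\|_{L^2(\mathcal{G})}\|\psi'\|_{L^2(\mathcal{G})}$ rather than the form stated in \eqref{EQ-GN infty} (which, as printed, is not homogeneous in $\psi$ and is presumably a typo for $\|\psi\|_{L^\infty(\mathcal{G})}\leq\|\psi\|_{L^2(\mathcal{G})}^{1/2}\|\psi'\|_{L^2(\mathcal{G})}^{1/2}$); the constant-free version follows by rearranging symmetrically onto $\mathbb{R}$ instead, which is legitimate on the grid since almost every level set has at least two preimages.
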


	The previous inequalities rely on the one-dimensional nature of the grid, and actually holds for every non-compact metric graphs. On the other hand, it is possible to exploit the three-dimensional structure of $\G$, and this leads to the following Sobolev inequality which holds only for the grid we are dealing with here.
	
	\begin{figure}[t]
		\centering
		\subfloat[][]{
			\begin{tikzpicture}
			[xscale= 0.4,yscale=0.4]
			\draw[step=3,thin] (-3,3) grid (9,9);
			\foreach \x in {-3,0,...,9} \foreach \y in {3,6,...,9} \node at (\x,\y) [nodo] {};
			
			\foreach \x in {-3,0,...,9} \foreach \y in {3,6,9}
			{\draw[thin] (\x,9)--(\x,9.7) (\x,3)--(\x,2.3)   (9,\y)--(9.7,\y); 
				\draw[dashed,thin] (\x,9.7)--(\x,10.2) (\x,2.3)--(\x,1.8) (9.7,\y)--(10.2,\y);}
			
			\foreach \x in {-3,0,...,9} \foreach \y in {3,6,...,9}
			{\draw[dashed,thin] (\x,\y)--(\x+2,\y+1.35);
				\draw[thin] (\x,\y)--(\x-0.5,\y-0.35);
				\draw[dashed,thin] (\x-0.5,\y-0.35)--(\x-1,\y-0.7);}
			
			\foreach \x in {-1.5,1.5,...,10.5} \foreach \y in {4,7,...,10} \node at (\x,\y)
			[nodo] {};
			
			\foreach \x in {-1.5,1.5,...,10.5} \foreach \y in {4,7} 
			{\draw[dashed,thin] (\x,\y)--(\x,\y+3);}
			
			\foreach \x in {-1.5,1.5,...,10.5} 
			{\draw[dashed,thin] (\x,4)--(\x,3) (\x,10)--(\x,11);}
			
			\foreach \x in {-1.5,1.5,...,7.5} \foreach \y in {4,7,10}
			{\draw[dashed,thin] (\x,\y)--(\x+3,\y);}
			
			\foreach \x in {4,7,10}
			{\draw[dashed,thin] (-1.5,\x)--(-2.5,\x) (10.5,\x)--(11.5,\x);}

			\foreach \y in {3,6,...,9} 
			{\node at (-3,\y) [nodo] {};
				\draw[thin] (-3,\y)--(0,\y);
				\draw[thin] (-3,\y)--(-3.7,\y);
				\draw[dashed,thin] (-3.7,\y)--(-4.2,\y);}
			
			\draw[line width=0.8 mm] (-3.7,6)--(4.2,6);
			\draw[dashed,line width=0.8 mm] (-4.2,6)--(-3.7,6);
			
			\node at (-5.2,6) {$\scriptstyle-\infty$};
			\node at (4,5.5) {$x$};
			\node at (-3.9,6.7) {$X_{j,k}$};
			\end{tikzpicture}}\qquad
		\subfloat[][]{
			\begin{tikzpicture}
			[xscale= 0.4,yscale=0.4]
			\draw[step=3,thin] (0,-3) grid (9,9);
			\foreach \x in {0,3,...,9} \foreach \y in {-3,0,...,9} \node at (\x,\y) [nodo] {};
			
			\foreach \x in {0,3,...,9} \foreach \y in {-3,0,...,9}
			{\draw[thin] (\x,9)--(\x,9.7) (\x,-3)--(\x,-3.7)   (9,\y)--(9.7,\y); 
				\draw[dashed,thin] (\x,9.7)--(\x,10.2) (\x,-3.7)--(\x,-4.2) (9.7,\y)--(10.2,\y);}
			
			\foreach \x in {0,3,...,9} \foreach \y in {-3,0,...,9}
			{\draw[dashed,thin] (\x,\y)--(\x+2,\y+1.35);
				\draw[thin] (\x,\y)--(\x-0.5,\y-0.35);
				\draw[dashed,thin] (\x-0.5,\y-0.35)--(\x-1,\y-0.7);}
			
			\foreach \x in {1.5,4.5,...,10.5} \foreach \y in {-2,1,...,10} \node at (\x,\y)
			[nodo] {};
			
			\foreach \x in {1.5,4.5,...,10.5} \foreach \y in {-2,1,...,7} 
			{\draw[dashed,thin] (\x,\y)--(\x,\y+3);}
			
			\foreach \x in {1.5,4.5,...,10.5} 
			{\draw[dashed,thin] (\x,4)--(\x,3) (\x,10)--(\x,11);}
			
			\foreach \x in {1.5,4.5,7.5} \foreach \y in {-2,1,...,10}
			{\draw[dashed,thin] (\x,\y)--(\x+3,\y);}

			\foreach \y in {-3,0,...,9} 
			{\draw[thin] (-0.7,\y)--(0,\y);
				\draw[dashed,thin] (-0.7,\y)--(-1.2,\y);}
			
			\foreach \y in {-2,1,...,10}
			{\draw[dashed,thin] (0.5,\y)--(1.5,\y);
				\draw[dashed,thin] (10.5,\y)--(11.5,\y);}
			
			\draw[line width=0.8 mm] (3,6)--(4.2,6);
			\draw[line width=0.8 mm] (3,6)--(3,-3.7);
			\draw[dashed,line width=0.8 mm] (3,-3.7)--(3,-4.2);
			
			\node at (4,5.5) {$x$};
			\node at (2.8,-4.6) {$\scriptstyle-\infty$};
			\node at (4,-3.7) {$Y_{i,k}$};
			
			\end{tikzpicture}}\\
		\subfloat[][]{
		\begin{tikzpicture}
[xscale= 0.6,yscale=0.4]
\draw[step=3,thin] (0,3) grid (6,9);
\foreach \x in {0,3,6} \foreach \y in {3,6,...,9} \node at (\x,\y) [nodo] {};

\foreach \x in {0,3,6} \foreach \y in {3,6,9}
{\draw[thin] (\x,9)--(\x,9.7) (\x,3)--(\x,2.3)   (6,\y)--(6.7,\y); 
	\draw[dashed,thin] (\x,9.7)--(\x,10.2) (\x,2.3)--(\x,1.8) (6.7,\y)--(7.2,\y);}

\foreach \x in {0,3,...,6} \foreach \y in {3,6,...,9}
{\draw[dashed,thin] (\x,\y)--(\x+6,\y+4.05);
	\draw[thin] (\x,\y)--(\x-0.5,\y-0.35);
	\draw[dashed,thin] (\x-0.5,\y-0.35)--(\x-1,\y-0.7);}

\foreach \x in {1.8,4.8,7.8} \foreach \y in {4.2,7.2,...,10.2} \node at (\x,\y)
[nodo] {};

\foreach \x in {1.8,4.8,7.8} \foreach \y in {4.2,7.2} 
{\draw[dashed,thin] (\x,\y)--(\x,\y+3);}

\foreach \x in {1.8,4.8,7.8} 
{\draw[dashed,thin] (\x,4.2)--(\x,3.2) (\x,10.2)--(\x,11.2);}

\foreach \x in {1.8,4.8} \foreach \y in {4.2,7.2,10.2}
{\draw[dashed,thin] (\x,\y)--(\x+3,\y) (7.8,\y)--(8.8,\y);}

\foreach \x in {4.2,7.2,10.2}
{\draw[dashed,thin] (1.8,\x)--(0.8,\x) (7.8,\x)--(8.8,\x);}

\foreach \x in {3.7,6.7,9.7} \foreach \y in {5.5,8.5,...,11.5} \node at (\x,\y) [nodo] {};

\foreach \x in {3.7,6.7,9.7} \foreach \y in {5.5,8.5}
{\draw[dashed,thin] (\x,\y)--(\x,\y+3);}

\foreach \x in {3.7,6.7} \foreach \y in {5.5,8.5,11.5}
{\draw[dashed,thin] (\x,\y)--(\x+3,\y);}

\foreach \y in {5.5,8.5,11.5}
{\draw[dashed,thin] (3.7,\y)--(2.7,\y) (9.7,\y)--(10.7,\y);}

\foreach \x in {3.7,6.7,9.7}
{\draw[dashed,thin] (\x,11.5)--(\x,12.5) (\x,5.5)--(\x,4.5);}

\foreach \x in {5.5,8.5,11.5} \foreach \y in {6.7,9.7,12.7} \node at (\x,\y) [nodo] {};

\foreach \x in {5.5,8.5} \foreach \y in {6.7,9.7,12.7}
{\draw[dashed,thin] (\x,\y)--(\x+3,\y);}

\foreach \x in {5.5,8.5,11.5} \foreach \y in {6.7,9.7}
{\draw[dashed,thin] (\x,\y)--(\x,\y+3) (\x,12.7)--(\x,13.7) (\x,6.7)--(\x,5.7);}

\foreach \y in {6.7,9.7,12.7}
{\draw[dashed,thin] (5.5,\y)--(4.5,\y) (11.5,\y)--(12.5,\y);}

\foreach \y in {3,6,...,9} 
{\draw[thin] (0,\y)--(-0.7,\y);
	\draw[dashed,thin] (-0.7,\y)--(-1.2,\y);}

\draw[line width=0.8 mm] (3,6)--(4.2,6);
\draw[line width=0.8 mm] (3,6)--(8.5,9.7);
\draw[dashed, line width=0.8 mm] (8.5,9.7)--(9,10.05);
)

\node at (2.4,6.5) {$Z_{i,j}$};
\node at (9.2,10.3) {$\scriptstyle-\infty$};
\node at (4.3,6.4) {$x$};

\end{tikzpicture}}
		\caption{The paths in the proof of Proposition \ref{PROP-GN 3D}: from $-\infty$ to $x$ along $X_{j,k}$ (a), along $Y_{i,k}$ (b) and along $Z_{i,j}$ (c).}
		\label{FIG-paths}
	\end{figure}
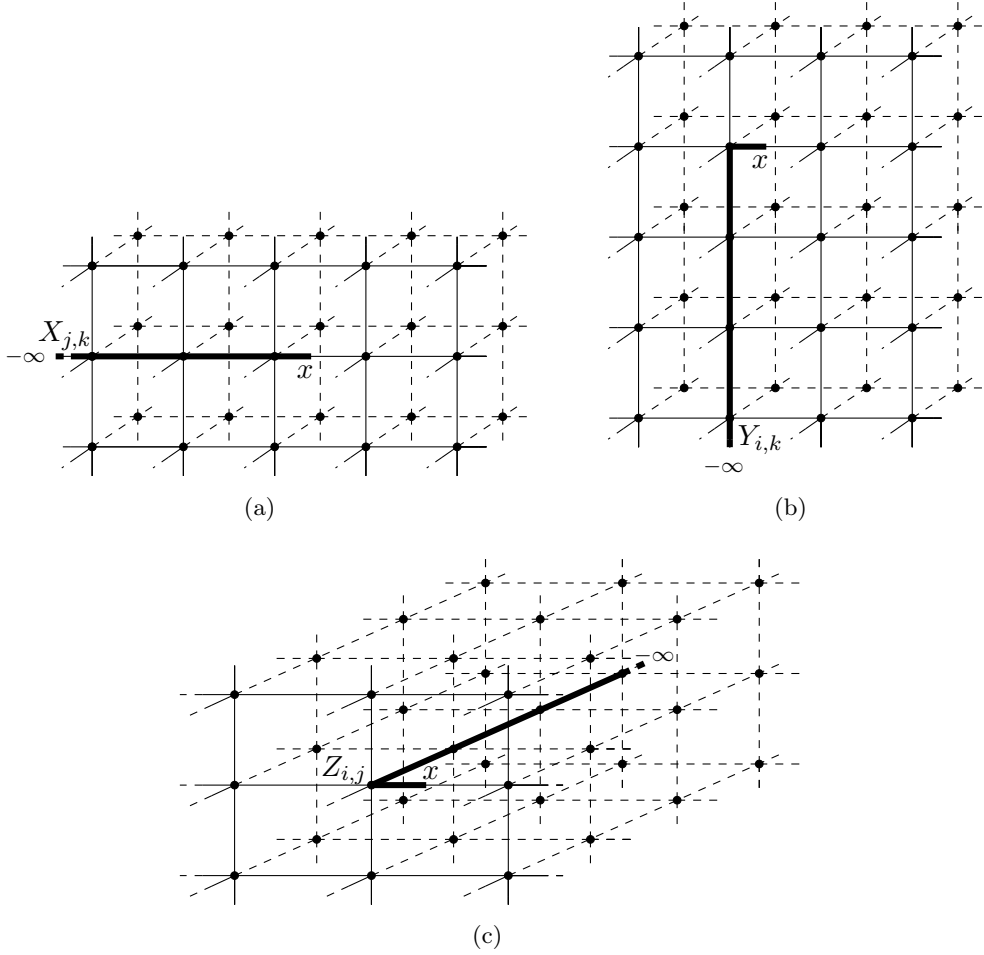

	\begin{prop}
		\label{PROP-sobolev}
		 For every $u\in W^{1,1}(\G)$, it holds
		 
		 \begin{equation}
		 	\label{EQ-sobolev}
		 	\|\psi\|_{L^{3/2}(\G)}\leq C\|\psi'\|_{L^1(\G)}\,.
		 \end{equation}
	\end{prop}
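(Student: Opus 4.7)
The plan is to mimic the classical proof of the critical Sobolev inequality $\|u\|_{L^{3/2}(\R^3)}\leq C\|\nabla u\|_{L^1(\R^3)}$, replacing the three coordinate lines in $\R^3$ through a given point with the three grid lines $X_{j,k}$, $Y_{i,k}$, $Z_{i,j}$ passing through (or adjacent to) that point on $\G$, and integrating $|\psi'|$ along grid paths rather than straight lines.

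First I would observe that, since $\psi\in W^{1,1}(\G)$, its restriction to every line of $\G$ lies in $W^{1,1}(\R)$ and therefore vanishes at $\pm\infty$. Fix a point $x=(x_1,j\ell,k\ell)$ on an $X$-edge, with $x_1\in[i\ell,(i+1)\ell]$, and let $v_i=(i\ell,j\ell,k\ell)$. Using continuity of $\psi$ at $v_i$ and the three paths from $-\infty$ to $x$ depicted in Fig.~\ref{FIG-paths}, one obtains three bounds
\begin{align*}
|\psi(x)|&\leq \alpha_{j,k}:=\int_{X_{j,k}}|\psi'|\,dt,\\
|\psi(x)|&\leq \beta_{i,k}+\delta_{i,j,k},\\
|\psi(x)|&\leq \gamma_{i,j}+\delta_{i,j,k},
\end{align*}
with $\beta_{i,k}:=\int_{Y_{i,k}}|\psi'|\,dt$, $\gamma_{i,j}:=\int_{Z_{i,j}}|\psi'|\,dt$ and $\delta_{i,j,k}:=\int_{X_{j,k}(i\ell,(i+1)\ell)}|\psi'|\,dt$. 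Multiplying the three bounds, taking the square root and using $(a+b)^{1/2}\leq a^{1/2}+b^{1/2}$ yields
\[
|\psi(x)|^{3/2}\leq \alpha_{j,k}^{1/2}\bigl(\beta_{i,k}^{1/2}+\delta_{i,j,k}^{1/2}\bigr)\bigl(\gamma_{i,j}^{1/2}+\delta_{i,j,k}^{1/2}\bigr),
\]
a sum of four monomials that are constant along the edge $[v_i,v_{i+1}]$.

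Integrating along each $X$-edge (of length $\ell$) and summing over $i,j,k$, the leading contribution is $\ell\sum_{i,j,k}\alpha_{j,k}^{1/2}\beta_{i,k}^{1/2}\gamma_{i,j}^{1/2}$, which is controlled by the discrete Loomis--Whitney inequality
\[
\sum_{i,j,k}a(j,k)^{1/2}b(i,k)^{1/2}c(i,j)^{1/2}\leq\Bigl(\sum_{j,k}a\Bigr)^{1/2}\Bigl(\sum_{i,k}b\Bigr)^{1/2}\Bigl(\sum_{i,j}c\Bigr)^{1/2},
\]
obtained via three successive Cauchy--Schwarz applications in the indices $i$, $j$, $k$; this yields a bound of the form $C\|\psi'\|_{L^1(\G)}^{3/2}$. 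The three cross-terms containing factors of $\delta^{1/2}$ are handled by analogous iterated H\"older arguments, exploiting $\sum_{i,j,k}\delta_{i,j,k}=\sum_{j,k}\alpha_{j,k}$ to absorb the extra index into the $X$-line contribution. Applying the same construction cyclically to $Y$- and $Z$-edges and adding the three partial estimates gives \eqref{EQ-sobolev}.

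The main obstacle I foresee is the careful bookkeeping of the residual local term $\delta_{i,j,k}$, which has no counterpart in the $\R^3$ proof: it appears precisely because a generic $x$ is not a vertex, so the second and third paths must traverse part of the $X$-edge containing $x$ after leaving the corresponding $Y$- or $Z$-line. The crucial check is that every cross-term, after H\"older's inequality, still collapses to a product of three line-wise $L^1$-norms of $|\psi'|$, matching the leading term in both the exponent $3/2$ and the $L^1(\G)$ scaling.
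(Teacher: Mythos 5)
Your proposal is correct and follows essentially the same route as the paper's proof: the same three paths from $-\infty$ to $x$, the same product of three bounds with $\sqrt{a+b}\leq\sqrt{a}+\sqrt{b}$, and the same treatment of the four resulting terms by iterated Cauchy--Schwarz (the leading term being exactly the discrete Loomis--Whitney estimate the paper carries out in \eqref{eq-stima3}, and the $\delta$-terms absorbed via $\sum_i\delta_{i,j,k}=\alpha_{j,k}$ as in \eqref{eq-stima1}--\eqref{eq-stima2}). The only cosmetic difference is that the paper argues first for smooth compactly supported functions and then invokes density, while you work directly with $W^{1,1}(\G)$ using decay at infinity.
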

	
	\noindent with $C>0$ depending only on the edge length $\ell$.
	

\begin{proof}
		
	We adapt a standard argument used to prove the Sobolev inequalities in $\rr^n$. Let $u\in C_0^\infty(\G)$, then
	
	\begin{equation}
		\label{EQ-norm 3/2}
		\|\psi\|_{L^{3/2}(\G)}^{3/2}=\sum_{(j,k)\in\zz^2}\int_{X_{j,k}}|\psi(x)|^{3/2}\,dx+\sum_{(i,k)\in\zz^2}\int_{Y_{i,k}}|\psi(x)|^{3/2}\,dx+\sum_{(i,j)\in\zz^2}\int_{Z_{i,j}}|\psi(x)|^{3/2}\,dx\,.
	\end{equation}

	\noindent We first evaluate $\int_{X_{j,k}}|\psi(x)|^{3/2}\,dx$. Let us consider a point $x\in\G$ located on $X_{j,k}$; by the natural embedding of $\G$ in $\rr^3$, $x$ corresponds to the point $(\xi,j\ell,k\ell)\in\rr^3$. We denote by $h$ the unique integer such that $h\ell\leq\xi\leq(h+1)\ell$. Note that $h$ is a piecewise constant function of $\xi$.


	We now estimate the values of $\psi(x)$ in three ways: first, by travelling on $X_{j,k}$ from $-\infty$ to $\xi$; second, by travelling on $Y_{h,k}$ from $-\infty$ to $j\ell$ and then on $X_{j,k}$ from $h\ell$ to $\xi$; third, by travelling on $Z_{h,j}$ from $-\infty$ to $k\ell$ and then on $X_{j,k}$ from $h\ell$ to $\xi$ (see Figure \ref{FIG-paths}). Using notation in \eqref{EQ-notation}, we have  
	
	\begin{equation}
		\label{EQ-u on X}
		\psi(x)=\int_{X_{j,k}(-\infty,\xi)}\psi'(t)\,dt
	\end{equation}
	
	\begin{equation}
		\label{EQ-u on Y}
		\psi(x)=\int_{Y_{h,k}(-\infty,j\ell)}\psi'(t)\,dt+\int_{X_{j,k}(h\ell,\xi)}\psi'(t)\,dt
	\end{equation}
	
	\begin{equation}
		\label{EQ-u on Z}
		\psi(x)=\int_{Z_{h,j}(-\infty,k\ell)}
\psi'(t)\,dt+\int_{X_{j,k}(h\ell,\xi)}
\psi'(t)\,dt\,.
	\end{equation}

	\noindent Therefore, we get
	
	\begin{equation}
		\label{EQ-sob1}
		\begin{split}
			|\psi(x)|^{3/2}=\sqrt{\Big|\int_{X_{j,k}(-\infty,\xi)}\psi'(t)\,dt\Big|}\,\cdot&\,\sqrt{\Big|\int_{Y_{h,k}(-\infty,j\ell)}\psi'(t)\,dt+\int_{X_{j,k}(h\ell,\xi)}\psi'(t)\,dt\Big|}\\
			\cdot&\,\sqrt{\Big|\int_{Z_{h,j}(-\infty,k\ell)}
\psi'(t)\,dt+\int_{X_{j,k}(h\ell,\xi)}\psi'(t)\,dt\Big|}\\
			\leq\sqrt{\int_{X_{j,k}}|\psi'(t)|\,dt}\,\cdot&\,\sqrt{\int_{Y_{h,k}}|\psi'(t)|\,dt+\int_{X_{j,k}(h\ell,(h+1)\ell)}|\psi'(t)|\,dt}\\
			\,\cdot\,&\sqrt{\int_{Z_{h,j}}|\psi'(t)|\,dt+\int_{X_{j,k}(h\ell,(h+1)\ell)}|\psi'(t)|\,dt}\\
			\leq\sqrt{\int_{X_{j,k}}|\psi'(t)|\,dt}\,\cdot&\,\Big(\sqrt{\int_{Y_{h,k}}|\psi'(t)|\,dt}+\sqrt{\int_{X_{j,k}(h\ell,(h+1)\ell)}|\psi'(t)|\,dt}\Big)\\
			\cdot&\,\Big(\sqrt{\int_{Z_{h,j}}|\psi'(t)|\,dt}+\sqrt{\int_{X_{j,k}(h\ell,(h+1)\ell)}|\psi'(t)|\,dt}\Big)\,.
		\end{split}
	\end{equation}

\noindent Since $\int_{Y_{h,k}}|\psi'(t)|\,dt,\,\int_{Z_{h,j}}|\psi'(t)|\,dt$ and $\int_{X_{j,k}(h\ell,(h+1)\ell)}|\psi'(t)|\,dt$ are piecewise constant functions of $x$, integrating on $X_{j,k}$, summing over $(j,k)\in\zz^2$ and computing the products, we find
	

	\[
	\begin{split}
	\sum_{(j,k)\in\zz^2}\int_{X_{j,k}}|\psi(x)|^{3/2}\,dx\leq\ell&\sum_{(j,k)\in\zz^2}\sqrt{\intX}\cdot\Big(\sum_{i\in\zz}\sqrt{\intY}\sqrt{\intZ}\\
	+&\sum_{i\in\zz}\sqrt{\intY}\sqrt{\int_{X_{j,k}(i\ell,(i+1)\ell)}|\psi'(t)|\,dt}\\
	+&\sum_{i\in\zz}\sqrt{\intZ}\sqrt{\int_{X_{j,k}(i\ell,(i+1)\ell)}|\psi'(t)|\,dt}\\
	+&\sum_{i\in\zz}\int_{X_{j,k}(i\ell,(i+1)\ell)}|\psi'(t)|\,dt\Big)\,.
	\end{split}
	\]

	\noindent Now, it is immediate to see that
	
	\begin{equation}
		\label{eq-stima1}
		\sum_{(j,k)\in\zz^2}\sqrt{\intX}\sum_{i\in\zz}\int_{X_{j,k}(i\ell,(i+1)\ell)}|\psi'(t)|\,dt=\sum_{(j,k)\in\zz^2}\Big(\intX\Big)^{3/2}\leq\|\psi'\|_{L^1(\G)}^{3/2}\,.
	\end{equation} 
	 
	\noindent Furthermore, by Schwarz inequality

	\begin{equation}
	\label{eq-stima2}
	\begin{split}
		\sum_{(j,k)\in\zz^2}\sqrt{\intX}&\sum_{i\in\zz}\sqrt{\intY}\sqrt{\int_{X_{j,k}(i\ell,(i+1)\ell)}|\psi'(t)|\,dt}\\
		\leq&\sum_{(j,k)\in\zz^2}\sqrt{\intX}\sqrt{\sum_{i\in\zz}\intY}\sqrt{\sum_{i\in\zz}\int_{X_{j,k}(i\ell,(i+1)\ell)}|\psi'(t)|\,dt}\\
		=&\sum_{(j,k)\in\zz^2}\sqrt{\intY}\intX\leq\|\psi'\|_{L^1(\G)}^{3/2}
	\end{split}
	\end{equation} 
	
	\noindent and the same holds for the term $\sum_{(j,k)\in\zz^2}\sqrt{\intX}\sum_{i\in\zz}\sqrt{\intZ}\sqrt{\int_{X_{j,k}(i\ell,(i+1)\ell)}|\psi'(t)|\,dt}$. Finally,

	\begin{equation}
		\label{eq-stima3}
		\begin{split}
		\sum_{(j,k)\in\zz^2}\sqrt{\intX}&\sum_{i\in\zz}\sqrt{\intY}\sqrt{\intZ}\\
		\leq&\sum_{(j,k)\in\zz^2}\sqrt{\intX}\sqrt{\sum_{i\in\zz}\intY}\sqrt{\sum_{i\in\zz}\intZ}\\
		\leq&\sum_{k\in\zz}\sqrt{\sum_{i\in\zz}\intY}\sqrt{\sum_{j\in\zz}\intX}\sqrt{\sum_{(i,j)\in\zz^2}\intZ}\\
		\leq & \sqrt{\|\psi'\|_{L^1(\G)}}\sqrt{\sum_{(i,k)\in\zz^2}\intY}\sqrt{\sum_{(j,k)\in\zz^2}\intX}\leq\|\psi'\|_{L^1(\G)}^{3/2}
		\end{split}
	\end{equation}

\noindent where inequalities are justified by subsequent application of Schwarz inequality. Thus, combining \eqref{eq-stima1}, \eqref{eq-stima2}, \eqref{eq-stima3}, we end up with
	 
	\noindent 
	\[
	\sum_{(j,k)\in\zz^2}\int_{X_{j,k}}|\psi(x)|^{3/2}\leq4\ell\|\psi'\|_{L^1(\G)}\sum_{(j,k)\in\zz^2}\sqrt{\int_{X_{j,k}}|\psi'(t)|\,dt}\leq 4\ell\|\psi'\|_{L^1(\G)}^{3/2}\,.
	\]
	
	\noindent The same estimate holds for the restriction of $u$ to the union of both $Y_{i,k}$ and $Z_{i,j}$, so that
	
	\[
	\|\psi\|_{L^{3/2}(\G)}^{3/2}\leq12\ell\|\psi'\|_{L^1(\G)}^{3/2}\,.
	\]
	
	\noindent By a standard density argument, this inequality holds for every function in $W^{1,1}(\G)$ and the proof is complete.

\end{proof}

	Thanks to \eqref{EQ-sobolev}, we can now prove that also a family of three-dimensional Gagliardo-Nirenberg inequalities holds true on $\G$.
	

	\begin{prop}
		\label{PROP-GN 3D}
		For every $p\in[2,6]$, there exists a constant $C_3=C_3(p)>0$ such that
		
		\begin{equation}
			\label{EQ-GN 3D}
			\| \psi \|_{L^p(\G)}^p \ \leq \ C_3
\| \psi \|_{L^2(\G)}^{3-\frac{p}{2}}\udot^{\frac{3p}{2}-3}\qquad\forall \psi \in H^1(\G)\,.
		\end{equation}
	\end{prop}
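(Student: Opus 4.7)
The plan is to follow the classical Nirenberg-style scheme: first extract the endpoint inequality at $p=6$ from the Sobolev inequality \eqref{EQ-sobolev}, then interpolate against the $L^2$ norm to cover the entire range $p\in[2,6]$. Note that the exponents in \eqref{EQ-GN 3D} degenerate nicely at the endpoints: at $p=2$ the inequality is trivial with $C_3(2)=1$, while at $p=6$ it reduces to $\|\psi\|_{L^6(\G)}^6\leq C_3(6)\|\psi'\|_{L^2(\G)}^6$, that is, the three-dimensional Sobolev embedding in disguise.

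For the endpoint $p=6$, the idea is to apply \eqref{EQ-sobolev} to $\varphi:=|\psi|^4$. A generic $\psi\in H^1(\G)$ belongs to $L^q(\G)$ for every $q\in[2,\infty)$ by \eqref{EQ-GN 1D} and is continuous on $\G$ (being $H^1$ on each line and continuous at each vertex); hence $\varphi$ is continuous on $\G$, edge-wise differentiable with $|\varphi'|\leq 4|\psi|^3|\psi'|$ almost everywhere, and, by H\"older, $\|\varphi'\|_{L^1(\G)}\leq 4\|\psi\|_{L^6(\G)}^3\|\psi'\|_{L^2(\G)}$, so $\varphi\in W^{1,1}(\G)$. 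Since $\|\varphi\|_{L^{3/2}(\G)}=\|\psi\|_{L^6(\G)}^4$, Proposition \ref{PROP-sobolev} yields $\|\psi\|_{L^6(\G)}^4\leq 4C\|\psi\|_{L^6(\G)}^3\|\psi'\|_{L^2(\G)}$, and therefore $\|\psi\|_{L^6(\G)}\leq 4C\|\psi'\|_{L^2(\G)}$. For arbitrary $p\in(2,6)$ one then interpolates by H\"older, writing $\|\psi\|_{L^p(\G)}\leq\|\psi\|_{L^2(\G)}^\theta\|\psi\|_{L^6(\G)}^{1-\theta}$ with $\theta=(6-p)/(2p)$, and raising to the $p$-th power; the identities $p\theta=3-p/2$ and $p(1-\theta)=3p/2-3$ are immediate and match the exponents of \eqref{EQ-GN 3D} precisely.

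The only mildly technical point is to justify that $\varphi=|\psi|^4$ truly lies in $W^{1,1}(\G)$ for a generic (possibly complex-valued) $\psi\in H^1(\G)$: continuity at the vertices is inherited from $\psi$, while the edge-wise chain rule with the pointwise bound on $|\varphi'|$ follows by a standard regularization of the absolute value, for instance via $t\mapsto\sqrt{t^2+\varepsilon}$ applied to $|\psi|^2$ and passed to the limit $\varepsilon\to 0^+$. Alternatively, one can first establish the $p=6$ endpoint for $\psi\in C_0^\infty(\G)$ and conclude by density, mimicking the density step at the end of the proof of Proposition \ref{PROP-sobolev}.
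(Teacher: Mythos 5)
Your proposal is correct and follows essentially the same route as the paper: the $p=6$ endpoint is obtained by applying the Sobolev inequality \eqref{EQ-sobolev} to $|\psi|^4$ together with H\"older (the paper carries the $3/2$ powers through explicitly, but after cancellation the two computations coincide), and the intermediate exponents are then recovered by the same $L^2$--$L^6$ interpolation. Your extra remark on justifying $|\psi|^4\in W^{1,1}(\G)$ addresses a point the paper leaves implicit, and the density alternative you suggest is exactly in the spirit of the paper's own proof of Proposition \ref{PROP-sobolev}.
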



\begin{proof}
		
		When $p=2$, inequality \eqref{EQ-GN 3D} is trivially true. 
		
		When $p=6$, the Sobolev inequality \eqref{EQ-sobolev} applied to $|\psi|^4$ (and properly modifying the constant $C$) yields
		
		\[
		\begin{split}
			\int_\G|\psi|^6\,dx=&\int_\G(|\psi|^4)^{3/2}\,dx\leq C_\ell^{3/2}\Big(\int_\G(|\psi|^4)'\,dx\Big)^{3/2}\leq C_\ell^{3/2}\Big(\int_\G4|\psi|^3|\psi'|\Big)^{3/2}\\
			\leq&C\Big(\int_\G|\psi|^6\,dx\Big)^{3/4}\Big(\int_\G|\psi'|^2\,dx\Big)^{3/4}\,,
		\end{split}
		\]

		\noindent that is
		
		\begin{equation}
			\label{EQ-u 6 con psi' 2}
			\| \psi \|_{L^6(\G)}^6=\int_\G|\psi|^6\,dx\leq C\Big(\int_\G|\psi'|^2\,dx\Big)^3=C\udot^{6}\,,
		\end{equation}

		\noindent i.e., \eqref{EQ-GN 3D} for $p=6$.
		
		When $p\in (2,6)$, writing $p=2t+6(1-t)$, for $t\in(0,1)$, we have
		
		\[
		\begin{split}
		\uLp^p=\int_\G|\psi|^p\,dx=&\int_\G|\psi|^{2t}|\psi|^{6(1-t)}\,dx\leq \Big(\int_\G|\psi|^2\,dx\Big)^t\Big(\int_\G|\psi|^6\,dx\Big)^{1-t}\\
		\leq&C^{1-t}\uLtwo^{2t}\udot^{6(1-t)}=C'\uLtwo^{3-\frac{p}{2}}\udot^{\frac{3p}{2}-3}
		\end{split}
		\]
		
		\noindent and we conclude.
		
	\end{proof}
	
	The co-existence of both one-dimensional and three-dimensional Gagliardo-Nirenberg inequality reflects into the appearance of a brand new class of such inequalities, that become relevant in the following.
	
	\begin{prop}
		\label{PROP-GN interp}
		For every $p\in\Big[\frac{10}{3},6\Big]$, there exists a constant $C_p=C_p(p)>0$ such that 
		\begin{equation}
			\label{EQ-GN interp}
			\uLp^p\leq C_p\uLtwo^{p-2}\udot^2\qquad\forall \psi
\in H^1(\G)\,.
		\end{equation}
	\end{prop}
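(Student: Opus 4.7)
The plan is to establish \eqref{EQ-GN interp} at the two endpoints $p=\frac{10}{3}$ and $p=6$ and then fill in the intermediate range by the standard log-convexity (Hölder) interpolation of $L^p$-norms.

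For the lower endpoint $p=\frac{10}{3}$, I would simply specialise the three-dimensional Gagliardo-Nirenberg inequality \eqref{EQ-GN 3D} given by Proposition \ref{PROP-GN 3D}: plugging $p=\frac{10}{3}$ into the exponents yields
\[
\| \psi \|_{L^{10/3}(\G)}^{10/3}\leq C_3\| \psi \|_{L^2(\G)}^{4/3}\udot^{2},
\]
which is exactly \eqref{EQ-GN interp} at $p=\frac{10}{3}$. For the upper endpoint $p=6$, I would invoke the one-dimensional Gagliardo-Nirenberg inequality \eqref{EQ-GN 1D} of Proposition \ref{PROP-GN 1D} at $p=6$, giving
\[
\| \psi \|_{L^6(\G)}^6\leq C_1\| \psi \|_{L^2(\G)}^{4}\udot^{2},
\]
which is \eqref{EQ-GN interp} at $p=6$.

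For any $p\in\bigl(\frac{10}{3},6\bigr)$, I would then use logarithmic convexity of $L^p$-norms: there is a unique $\sigma=\sigma(p)\in(0,1)$ with $\frac{1}{p}=\frac{3\sigma}{10}+\frac{1-\sigma}{6}$, and Hölder's inequality gives
\[
\uLp\leq\| \psi \|_{L^{10/3}(\G)}^{\sigma}\| \psi \|_{L^6(\G)}^{1-\sigma}.
\]
Raising to the $p$-th power and substituting the two endpoint inequalities already established, one obtains an estimate of the form
\[
\uLp^p\leq C_p\| \psi \|_{L^2(\G)}^{a(\sigma)}\udot^{b(\sigma)},
\]
with $a(\sigma)=\frac{2p\sigma}{5}+\frac{2p(1-\sigma)}{3}$ and $b(\sigma)=\frac{3p\sigma}{5}+\frac{p(1-\sigma)}{3}$. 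A short computation using $\sigma=\frac{30-5p}{4p}$ gives $b(\sigma)=2$, and then homogeneity (the total degree must be $p$) forces $a(\sigma)=p-2$, which is exactly \eqref{EQ-GN interp}.

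There is essentially no serious obstacle here: the bulk of the work was already carried out in Proposition \ref{PROP-sobolev} (to obtain \eqref{EQ-GN 3D}) and in the one-dimensional inequality of Proposition \ref{PROP-GN 1D}; what remains is just bookkeeping of exponents. The only minor point requiring care is to verify that the interpolation parameter $\sigma(p)$ ranges over $[0,1]$ as $p$ ranges over $[\frac{10}{3},6]$ and that the resulting exponent of $\udot$ is precisely $2$, independently of $p$. This is what makes every $p$ in this range critical for the NLS on the three-dimensional grid.
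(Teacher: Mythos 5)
Your proposal is correct and follows essentially the same route as the paper: both obtain the endpoint $p=\tfrac{10}{3}$ from the three-dimensional inequality \eqref{EQ-GN 3D}, the endpoint $p=6$ from the one-dimensional inequality \eqref{EQ-GN 1D}, and fill in the intermediate exponents by H\"older interpolation between the $L^{10/3}$- and $L^6$-norms. Your exponent bookkeeping ($b(\sigma)=2$, $a(\sigma)=p-2$) checks out.
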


	\begin{proof}
		
	Notice that \eqref{EQ-GN interp} reduces to \eqref{EQ-GN 3D} when $p=\frac{10}{3}$, whereas it coincides with \eqref{EQ-GN 1D} when $p=6$. Then, for every $p\in\Big(\frac{10}{3},6\Big)$, \eqref{EQ-GN interp} follows by interpolation as in the final part of the previous proof.
		
	\end{proof}

\end{document}